\documentclass[reqno,11pt]{amsproc}
\usepackage[canadian]{babel}
\usepackage{amssymb}
\usepackage{amsmath}
\usepackage{amsfonts}
\usepackage{microtype}
\usepackage{xcolor}
\usepackage[left=2.8cm,right=2.8cm]{geometry}
\usepackage{enumitem}
\usepackage{mathtools}

\definecolor{myurlcolor}{rgb}{0,0,0.4}
\definecolor{mycitecolor}{rgb}{0,0.5,0}
\definecolor{myrefcolor}{rgb}{0.5,0,0}
\usepackage[pagebackref,draft=false]{hyperref}
\hypersetup{colorlinks,
linkcolor=myrefcolor,
citecolor=mycitecolor,
urlcolor=myurlcolor}

\usepackage[capitalize]{cleveref}

\newcommand{\beq}{\begin{equation}}
\newcommand{\eeq}{\end{equation}}
\newcommand{\N}{\mathbb{N}}

\newcommand{\R}{\mathbb{R}}
\newcommand{\C}{\mathbb{C}}
\newcommand{\TR}{\mathbb{TR}}

\DeclareMathOperator{\diag}{\mathrm{diag}}
\DeclareMathOperator{\tr}{\mathrm{tr}}

\DeclareMathOperator{\SU}{SU}
\DeclareMathOperator{\GL}{GL}
\DeclareMathOperator{\SL}{SL}
\newcommand{\Rep}[1]{\mathsf{Rep}(#1)}		
\DeclareMathOperator{\Schur}{\mathsf{Schur}}	
\DeclareMathOperator{\chr}{\mathsf{ch}}	
\DeclareMathOperator{\WP}{\mathsf{WP}}	
\DeclareMathOperator{\interior}{int}
\DeclareMathOperator{\mult}{\mathsf{m}}

\swapnumbers
\newtheorem{dummy}{}[section]
\newtheorem{thm}[dummy]{Theorem}\Crefname{thm}{Theorem}{Theorems}
\newtheorem{lem}[dummy]{Lemma}\Crefname{lem}{Lemma}{Lemmas}
\newtheorem{prop}[dummy]{Proposition}\Crefname{prop}{Proposition}{Propositions}
\Crefname{cor}{Corollary}{Corollaries}

\newtheorem{defn}[dummy]{Definition}

\newtheorem{prob}[dummy]{Problem}
\theoremstyle{remark}
\Crefname{ex}{Example}{Examples}
\newtheorem{rem}[dummy]{Remark}

\numberwithin{equation}{section}

\let\originalleft\left
\let\originalright\right
\renewcommand{\left}{\mathopen{}\mathclose\bgroup\originalleft}
\renewcommand{\right}{\aftergroup\egroup\originalright}


\setlist[enumerate]{topsep=6pt,itemsep=6pt}
\setlist[itemize]{topsep=6pt,itemsep=6pt}

\allowdisplaybreaks

\newcommand{\newterm}[1]{\textbf{#1}}

\usepackage{todonotes}


\begin{document}
\sloppy

\setlength{\jot}{6pt}



\title{Asymptotic and catalytic containment\\ of representations of $\SU(n)$}

\author{Tobias Fritz}

\address{Department of Mathematics, University of Innsbruck, Austria}
\email{tobias.fritz@uibk.ac.at}

\keywords{}

\subjclass[2020]{Primary: 20G05; Secondary: 19A22, 06F25.}

\thanks{\textit{Acknowledgements.} We thank Tim Netzer for useful discussions and Richard Stanley for pertinent input on MathOverflow, which has resulted in the proof of \Cref{real_schur} and indirectly in the proof of our main result, and an anonymous referee for helpful feedback.}

\begin{abstract}
	Given two finite-dimensional representations $\rho$ and $\sigma$ of $\SU(n)$, when is there $n \in \N$ such that $\rho^{\otimes n}$ is isomorphic to a subrepresentation of $\sigma^{\otimes n}$?
	When is there a third representation $\eta$ such that $\rho \otimes \eta$ is a subrepresentation of $\sigma \otimes \eta$?
	We call these the questions of \emph{asymptotic} and \emph{catalytic} containment, respectively.

	We answer both questions in terms of an explicit family of inequalities.
	These inequalities are almost necessary and sufficient in the following sense.
	If two representations satisfy all inequalities strictly, then asymptotic and catalytic containment follow (the former in generic cases).
	Conversely, if asymptotic or catalytic containment holds, then the inequalities must hold non-strictly.
	These results are an instance of a recent \emph{Vergleichsstellensatz} applied to the representation semiring. 
\end{abstract}

\maketitle

\tableofcontents

\section{Introduction}

The goal of this note is to prove the following result:

\begin{thm}
	\label{main}
	For $n \ge 2$ and finite-dimensional complex vector spaces $V$ and $W$, let
	\[
		\rho \: : \: \SU(n) \longrightarrow \GL(V), \qquad \sigma \: : \: \SU(n) \longrightarrow \GL(W)
	\]
	continuous representations of $\SU(n)$, or equivalently algebraic representations of $\SL(n,\C)$.
	Suppose that the following conditions hold:
	\begin{enumerate}
		\item\label{main_real} The associated characters $\chr$ satisfy
			\beq
				\label{main_real_ineq}
				\chr_\rho(x) \,<\, \chr_\sigma(x)
			\eeq
			for all $x \in \R_+^n$ with $x_1 \cdots x_n = 1$,
		\item\label{main_trop} For the associated weight polytopes\footnote{Recall that the weight polytope is defined as the convex hull of all weights.} $\WP$, we have
			\beq
				\label{main_trop_ineq}
				\WP(\rho) \,\subseteq\, \interior(\WP(\sigma)),
			\eeq
			where $\interior$ denotes interior.
	\end{enumerate}
	Then the following subrepresentation relations hold:
	\begin{enumerate}[label=(\alph*)]
		\item \newterm{Catalytic}: there is a nonzero representation $\eta$ such that
			\beq
				\label{main_cat}
				\rho \otimes \eta \hookrightarrow \sigma \otimes \eta.
			\eeq
		\item \newterm{Asymptotic}: if $\sigma$ is generic, then
			\beq
				\label{main_asymp}
				\rho^{\otimes n} \hookrightarrow \sigma^{\otimes n} \qquad \forall n \gg 1.
			\eeq
	\end{enumerate}
	Conversely, if \eqref{main_cat} holds for some nonzero $\eta$ or \eqref{main_asymp} holds for some $n \ge 1$, then the inequalities in~\ref{main_real} hold at least non-strictly, and similarly for~\ref{main_trop} in the sense that $\WP(\rho) \subseteq \WP(\sigma)$.
\end{thm}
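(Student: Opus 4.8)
The plan is to recast both containment questions as order relations in the \emph{representation semiring} $S := \mathsf{Rep}(\SU(n))$ — the commutative semiring whose underlying additive monoid is the monoid of isomorphism classes of finite-dimensional representations under $\oplus$, with multiplication $\otimes$ and unit the trivial representation — and then to feed $S$ into the abstract Vergleichsstellensatz for preordered semirings. Since representations of the compact group $\SU(n)$ are completely reducible, $\rho \hookrightarrow \sigma$ holds if and only if $\sigma \cong \rho \oplus \tau$ for some $\tau \in S$; thus $\hookrightarrow$ is exactly the ``difference is an element'' preorder on $S$, it is a partial order, and $S$ is zerosumfree and finitely generated (by the fundamental representations). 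The Vergleichsstellensatz will then reduce \eqref{main_cat} and \eqref{main_asymp} to comparing a prescribed family of monotone semiring homomorphisms out of $S$, valued in $\R_+ = (\R_{\ge 0}, +, \cdot)$ and in the tropical semiring $\TR = (\R \cup \{-\infty\}, \max, +)$.

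The technical heart is the classification of those homomorphisms. Every semiring homomorphism $S \to \R_+$ is automatically monotone, since images of genuine representations are nonnegative, so one must describe \emph{all} of them. As $S$ is generated by the fundamental representations subject to the Littlewood--Richardson relations, such a homomorphism amounts to a point-evaluation of the character ring that is nonnegative on every irreducible character; the content of \Cref{real_schur} — this is where Stanley's input enters, and it is a finite-variable analogue of the Edrei--Thoma classification of Schur-positive specializations — is that the only such evaluations are $\rho \mapsto \chr_\rho(x)$ for $x$ in the split real maximal torus $\{x \in \R_+^n : x_1 \cdots x_n = 1\}$, the normalization $x_1 \cdots x_n = 1$ being forced because the determinant representation is trivial on $\SU(n)$. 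The monotone homomorphisms $S \to \TR$ are elementary: under $\otimes$ weights add while under $\oplus$ weight multisets unite, so for every linear functional $\ell$ on the weight space the support-function assignment $\rho \mapsto \max_{\mu \in \mathrm{weights}(\rho)} \langle \ell, \mu\rangle$ is a monotone homomorphism, and a short argument shows these (together with the degenerate one attached to $\ell = 0$) exhaust them.

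With the two families in hand, hypothesis \ref{main_real}, i.e.\ \eqref{main_real_ineq}, says precisely that $f(\rho) < f(\sigma)$ for every monotone $f \colon S \to \R_+$ — including the evaluation at $x = \mathbf 1$, which is $\dim \rho < \dim \sigma$ — while hypothesis \ref{main_trop}, i.e.\ \eqref{main_trop_ineq}, is equivalent, via the support-function characterization ``$A$ compact convex and $A \subseteq \interior(B)$ iff $h_A(\ell) < h_B(\ell)$ for all $\ell \neq 0$'' applied to $A = \WP(\rho)$, $B = \WP(\sigma)$, to $g(\rho) < g(\sigma)$ for every monotone $g\colon S \to \TR$ coming from a nonzero functional. (In fact \ref{main_trop} is exactly the tropical shadow of \ref{main_real}, obtained by the limit $\tfrac1t \log \chr_\rho(e^{t\ell}) \to h_{\WP(\rho)}(\ell)$, but strengthened to a strict inequality at the boundary.) These strict inequalities are exactly the hypotheses under which the abstract Vergleichsstellensatz — in its form that does not assume a power-universal element — yields catalytic containment \eqref{main_cat}. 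To upgrade to the asymptotic statement \eqref{main_asymp} the Vergleichsstellensatz additionally needs $\sigma$ to be \emph{power universal}, i.e.\ that every representation embed into some tensor power of $\sigma$; this is what ``$\sigma$ generic'' should be taken to mean, and it is a genuinely necessary extra hypothesis (for $\SU(3)$ and $\sigma = \mathrm{adj} \oplus \mathbf 1$, $\rho = $ standard representation, both \ref{main_real} and \ref{main_trop} hold but $\rho^{\otimes n} \not\hookrightarrow \sigma^{\otimes n}$ whenever $3 \nmid n$, on degree-grading grounds). The main obstacle in this whole direction is \Cref{real_schur}: excluding exotic positive character evaluations is a real positivity theorem, whereas everything else is bookkeeping plus an appeal to the abstract machinery.

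The converse is soft and uses none of this. If $\rho \otimes \eta \hookrightarrow \sigma \otimes \eta$ with $\eta \neq 0$, apply a character evaluation $f$ at $x \in \R_+^n$ with $x_1 \cdots x_n = 1$: then $f(\rho)\,f(\eta) = f(\rho \otimes \eta) \le f(\sigma \otimes \eta) = f(\sigma)\,f(\eta)$, and $f(\eta) = \chr_\eta(x) > 0$ because $\eta \neq 0$ and every weight contributes a positive monomial, so $f(\rho) \le f(\sigma)$; this is \eqref{main_real_ineq} non-strictly. Passing to weight polytopes, $\WP(\rho) + \WP(\eta) = \WP(\rho \otimes \eta) \subseteq \WP(\sigma \otimes \eta) = \WP(\sigma) + \WP(\eta)$, and Minkowski summands cancel (Rådström cancellation), giving $\WP(\rho) \subseteq \WP(\sigma)$. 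The asymptotic case needs no catalyst: from $\rho^{\otimes n} \hookrightarrow \sigma^{\otimes n}$ one gets $f(\rho)^n \le f(\sigma)^n$, hence $f(\rho) \le f(\sigma)$, and $n\,\WP(\rho) \subseteq n\,\WP(\sigma)$, hence $\WP(\rho) \subseteq \WP(\sigma)$.
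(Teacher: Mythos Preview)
Your overall strategy matches the paper's: realize $\Rep{\SU(n)}$ as a preordered semiring, classify the monotone homomorphisms to $\R_+$ (via \Cref{real_schur}) and to $\TR_+$ (via \Cref{trop_schur}), translate hypotheses \ref{main_real} and \ref{main_trop} into the strict spectral inequalities, and invoke the Vergleichsstellensatz \Cref{vss}. Two points, however, need correcting.

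First, the Vergleichsstellensatz actually used (\Cref{vss}) requires the ambient semiring to possess a power universal element \emph{even for the catalytic conclusion}; there is no ``form that does not assume a power-universal element'' on offer here. The paper therefore proves (\Cref{SchurS_pu}) that $u = 1 + e_1$, i.e.\ the class of $\mathbf 1 \oplus \mathrm{(standard)}$, is power universal in $\Schur_n^S \cong \Rep{\SU(n)}$. This lemma does double duty: it supplies the standing hypothesis of \Cref{vss}, and it forces every monotone homomorphism $\phi$ to $\R_+$ or $\TR_+$ to have trivial kernel (if $\phi(a) = 0$ for nonzero $a$, then $a u^k \ge 1$ yields $0 = \phi(a u^k) \ge \phi(1) = 1$). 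The trivial-kernel conclusion is what makes the classifications in \Cref{real_schur} and \Cref{trop_schur} exhaustive; you invoke those results but do not address why no further homomorphisms exist.

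Second, ``generic'' is not defined in the paper as ``power universal''; it means concretely that $\sigma$ contains both the trivial and the standard representation, i.e.\ $\sigma \ge u$. Combined with \Cref{SchurS_pu} this implies $\sigma$ is power universal, which is the extra hypothesis \Cref{vss} needs for the asymptotic conclusion. Your $\SU(3)$ example correctly shows why some such hypothesis is unavoidable. Your converse argument via character evaluation and R{\aa}dstr{\"o}m cancellation is more explicit than the paper's, which simply cites the converse clause of \Cref{vss}; both are valid.
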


In this theorem and throughout the paper, all of our representations are assumed finite-dimensional complex, and as usual continuous for $\SU(n)$ and algebraic for $\SL(n,\C)$, and we will use both of these interchangeably.
We say that a representation is \newterm{generic} if it contains both the trivial and the standard representation as subrepresentations.

The converse part mentioned at the end of \Cref{main} is easy to prove. We include it mainly in order to indicate that our forward direction, which gives a sufficient condition for catalytic and asymptotic containment, is almost necessary as well: the only difference between the two directions is the strictness of the inequalities (and the genericity assumption on $\sigma$ in the asymptotic case).

\begin{rem}
	Condition \ref{main_trop} can be thought of as extending~\ref{main_real} to infinity, in the sense that it is equivalent to
	\beq
		\label{main_trop_ineq2}
		\lim_{r \to \infty} \sqrt[r]{\chr_\rho(x^r)} \,<\, \lim_{r \to \infty} \sqrt[r]{\chr_\sigma(x^r)}
	\eeq
	for all $x \in \R_+^n$ with $x_1 \cdots x_n = 1$, where the map $x \mapsto x^r$ is defined componentwise.
\end{rem}

We prove \Cref{main} in \Cref{sec_proof}. 
It follows as an application of a recent \newterm{Vergleichsstellensatz} proven in~\cite{vergleichsstellensatz}, which is a separation theorem for preordered semirings that we recall in \Cref{vss_back}.
This application is facilitated by some preliminary considerations on Schur polynomials in~\Cref{sec_schur}.
In \Cref{SU2_case}, we present a somewhat more explicit form of \Cref{main} for $n = 2$ as \Cref{main_SU2}.

The main open problem is as follows.

\begin{prob}
	Formulate a generalization of \Cref{main} that holds similarly for all compact Lie groups in place of $\SU(n)$.
\end{prob}

Mainly, it is not clear to us on which set of points the corresponding inequalities~\eqref{main_real_ineq} will be required to hold in the case of a general compact Lie group.
We expect that everything else can be generalized verbatim, and also that the application of our Vergleichsstellensatz will still go through, based on suitably generalized versions of \Cref{real_schur,trop_schur}.

\section{Background: A Vergleichsstellensatz for preordered semirings}
\label{vss_back}

Here, we present our recent Vergleichsstellensatz~\cite[Theorem~7.15]{vergleichsstellensatz} for preordered semirings after a brief recap of the relevant definitions.
Recall that a \newterm{semiring} is like a ring, except in that the existence of additive inverses is not required. 
Throughout, all of our semirings are assumed commutative.

\begin{defn}
	A \newterm{preordered semiring} is a semiring $S$ together with a preorder relation $\le$ such that both addition and multiplication are monotone,
	\[
		a \le b \quad\Longrightarrow\quad a + c \le b + c, \quad ac \le bc.
	\]
\end{defn}

Two especially important preordered semirings are the following:
\begin{itemize}
	\item The nonnegative reals $\R_+$ with the standard ordering and algebraic structure.
	\item The \newterm{tropical reals} $\TR_+ \coloneqq (\R \cup \{-\infty\}, \max, +)$ with the standard ordering.
\end{itemize}

The following definition combines the notion of power universal element introduced as~\cite[Definition~3.27]{vergleichsstellensatz} with the special case characterization given in~\cite[Lemma~3.29]{vergleichsstellensatz}.

\begin{defn}
	Let $S$ be a preordered semiring with $1 \ge 0$. Then an element $u \in S$ is \newterm{power universal} if $u \ge 1$ and for every nonzero $a \in S$ there is $k \in \N$ such that
	\[
		a \le u^k, \qquad a u^k \ge 1.
	\]
\end{defn}

\begin{rem}
	\label{pu_rem}
	\begin{enumerate}
		\item If such $S$ has a power universal element and $1 \not\le 0$, then it follows that $S$ is both zerosumfree and has no zero divisors~\cite[Remark~3.36]{vergleichsstellensatz}.
		\item By $u \ge 1$, it is enough to check the power universality inequalities on a generating subset which includes the element $2 = 1 + 1$.
			This follows from the fact that the set of $a \in S$ that satisfy the relevant conditions is closed under addition (since it contains $2$) and under multiplication (obviously).
	\end{enumerate}
\end{rem}

Our Vergleichsstellensatz then reads as follows, where we restrict the present formulation to those statements that are of interest to us here.

\begin{thm}[{\cite[Theorem~7.15]{vergleichsstellensatz}}]
	\label{vss}
	Let $S$ be a preordered semiring with $1 > 0$ and a power universal element $u$. Suppose that nonzero $a, b \in S$ satisfy the following:
	\begin{enumerate}
		\item\label{real_spec} For all monotone homomorphisms $\phi : S \to \R_+$,
			\beq
				\label{real_spec_le}
				\phi(a) < \phi(b).
			\eeq
		\item\label{trop_spec} For all monotone homomorphisms $\psi : S \to \TR_+$ with $\psi(u) > 0$,
			\beq
				\label{trop_spec_le}
				\psi(a) < \psi(b).
			\eeq
	\end{enumerate}
	Then also the following hold:
	\begin{itemize}
		\item There is nonzero $c \in S$ such that
			\beq
				\label{cat_le}
				a c \le b c.
			\eeq
		\item If $b$ is power universal as well, then
			\beq
				\label{asymp_le}
				a^n \le b^n \qquad \forall n \gg 1.
			\eeq
	\end{itemize}
	Conversely, if \eqref{cat_le} holds for some nonzero $c$ or \eqref{asymp_le} holds for some $n \ge 1$, then the inequalities \eqref{real_spec_le} and \eqref{trop_spec_le} hold non-strictly.
\end{thm}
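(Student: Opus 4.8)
The converse direction is the easy one, so we treat it first. Since $u$ is power universal, every nonzero $x \in S$ satisfies $x u^k \ge 1$ for some $k$, so any monotone homomorphism $\phi : S \to \R_+$ (which fixes $1$) gives $\phi(x)\phi(u)^k \ge 1$, hence $\phi(x) > 0$; in the same way, every monotone homomorphism $\psi : S \to \TR_+$ with $\psi(u) > 0$ is finite on nonzero elements. Therefore, if \eqref{cat_le} holds with $c \ne 0$, then $\phi(a)\phi(c) \le \phi(b)\phi(c)$ with $\phi(c) > 0$ cancels to $\phi(a) \le \phi(b)$, and likewise $\psi(a) \le \psi(b)$ after cancelling the finite quantity $\psi(c)$; and if \eqref{asymp_le} holds for some $n \ge 1$, then $\phi(a)^n \le \phi(b)^n$ and $n\psi(a) \le n\psi(b)$ give the same. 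This is exactly the non-strict form of \eqref{real_spec_le} and \eqref{trop_spec_le}.

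The forward direction carries the real content, and the plan is to prove it as a separation (Hahn--Banach-type) theorem for $S$. Introduce the \emph{catalytic preorder} $\preceq$, with $x \preceq y$ iff $xc \le yc$ for some nonzero $c$; by \Cref{pu_rem} this is again a semiring preorder refining $\le$, with $1 \not\preceq 0$, and \eqref{cat_le} is precisely the assertion ``$a \preceq b$''. One argues by contradiction: if $a \not\preceq b$, one must produce a monotone homomorphism into $\R_+$, or into $\TR_+$ with positive value on $u$, under which $a$ is sent to a value $\ge$ the image of $b$, contradicting the strict inequalities \eqref{real_spec_le}, \eqref{trop_spec_le}. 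The mechanism is to extend $\preceq$ by the single relation ``$b \preceq a$'' and, using Zorn's lemma, pass to a maximal preordered-semiring quotient of $S$ in which this extension remains proper (so that $1 \not\le 0$), the family of such extensions being nonempty precisely because $a \not\preceq b$; such a quotient is automatically Archimedean relative to the image of $u$, inherited from the power universality of $u$, and a Hölder-type structure theorem then identifies it with a sub-semiring of $\R_+$ or of $\TR_+$, the dichotomy being governed by whether the class of $2 = 1 + 1$ coincides with that of $1$ (tropical) or not (real), with the image of $u$ failing to collapse onto $1$ guaranteeing $\psi(u) > 0$ in the tropical case. Composing with the quotient map gives the sought homomorphism. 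The role of strictness, and the reason the non-strict hypotheses are genuinely insufficient, is quantitative: on the compact space of all relevant homomorphisms the strict inequalities upgrade to a uniform Archimedean gap of the shape $\phi(a) \le \phi(b)\,\phi(u)^{-\eps}$ and $\psi(a) + \eps\,\psi(u) \le \psi(b)$ for some fixed $\eps > 0$, and it is from this uniform gap that one actually manufactures the catalyst $c$. This last extraction — the true Positivstellensatz content of \Cref{vss} — is the step I expect to be the main obstacle, together with the structure theorem it rests on.

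For the asymptotic statement one assumes moreover that $b$ is power universal. From $ac \le bc$ with $c \ne 0$ an immediate induction gives $a^n c \le b^n c$ for all $n$, and since $b$ is power universal there are $l, m$ with $c \le b^l$ and $c\,b^m \ge 1$, whence $a^n \le a^n c\,b^m \le b^n c\,b^m \le b^{\,n + l + m}$. This is \eqref{asymp_le} only up to a fixed overhead $l + m$ in the exponent; removing it uses the uniform gap above once more, the powers of $b$ now growing fast enough relative to $u$ that the $\eps$-slack compounded over $n$ steps eventually absorbs the $O(1)$ overhead, yielding $a^n \le b^n$ for all $n \gg 1$. The hypothesis ``$b$ power universal'' is exactly what makes $b^n$ large enough for this, and is what distinguishes the asymptotic conclusion from the catalytic one.

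In summary, the routine ingredients are the converse and the bookkeeping around power universality; the substantive work is the Hölder-type structure theorem underlying the separation and, above all, the conversion of uniform strict positivity on the compact space of monotone homomorphisms into $\R_+$ and $\TR_+$ into the explicit algebraic inequalities \eqref{cat_le} and \eqref{asymp_le}.
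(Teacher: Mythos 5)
Your converse direction is correct and is exactly the easy half: power universality forces $\phi(c)>0$ and $\psi(c)>-\infty$ for nonzero $c$, after which one cancels. Note, however, that the paper does not prove this theorem at all --- it is imported verbatim as \cite[Theorem~7.15]{vergleichsstellensatz} --- so the only part of your text that could be checked against anything in this document is already the part the author treats as routine.

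The forward direction, which is the entire content of the statement, is not proved in your proposal; it is a plausible road map with the two decisive steps explicitly left open, and you say so yourself. Concretely: (1) you assert without proof a ``H\"older-type structure theorem'' identifying the maximal quotients with subsemirings of $\R_+$ or $\TR_+$, with the dichotomy governed by whether $1+1$ collapses to $1$ --- this classification (of multiplicatively Archimedean fully preordered semifields/semirings) is the technical heart of \cite{vergleichsstellensatz} and occupies a large part of that paper, including the passage from semirings to semifields via localization and the verification that no other ``types'' of quotient occur; (2) you assert a uniform gap $\phi(a)\le\phi(b)\phi(u)^{-\eps}$ over a compact space of homomorphisms and claim the catalyst $c$ and the removal of the $O(1)$ exponent overhead $b^{n+l+m}\rightsquigarrow b^n$ are ``manufactured'' from it, but you give no construction and no argument for compactness of the relevant spectrum (which requires a topology on the set of monotone homomorphisms and a nontrivial argument that it is compact and that strict inequalities are open conditions). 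Your derivation $a^n\le b^{n+l+m}$ is fine as far as it goes, but it does not yield \eqref{asymp_le}, and the step that would is precisely the one you skip. As a submission this should either carry out those arguments or simply cite \cite[Theorem~7.15]{vergleichsstellensatz} as the paper does; in its present form the proof has a genuine gap exactly where the theorem has its content.
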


In~\cite{vergleichsstellensatz} we also gave a concrete form for a ``catalyst'' $c$ as in~\eqref{cat_le}, but this concrete form will be of less interest to us here.
A first application of a related Vergleichsstellensatz to the asymptotics of random walks on topological abelian groups has been given in~\cite{random_walks}.
The present paper presents our second application.

\section{The preordered semiring of Schur positive polynomials}
\label{sec_schur}

Fixing a number of variables $n \in \N$, we write $\Schur_n$ for the semiring of Schur positive symmetric integer polynomials in $n$ variables, preordered such that $p \le q$ if and only if $q - p$ is Schur positive.

Equivalently, $\Schur_n$ is the preordered semiring consisting of formal sums $\sum_\lambda p_\lambda s_\lambda$ of basis elements $s_\lambda$ indexed by partitions $\lambda = (\lambda_1 \ge \cdots \ge \lambda_\ell > 0)$ of length $\ell \le n$ and with coefficients $p_\lambda \in \N$. These multiply via the unique bilinear extension of
\beq
	\label{LR_mult}
	s_\mu s_\nu \,=\, \sum_\lambda c^\lambda_{\mu\nu} s_\lambda
\eeq
with Littlewood-Richardson coefficients $c^\lambda_{\mu\nu}$, and the preorder is the coefficientwise one.
The unit element of $\Schur_n$ is $1 = s_{()}$, the Schur polynomial associated to the empty partition.
For $0 \le j \le n$, we also write $e_j = s_{\underbrace{\scriptstyle{(1,\ldots,1)}}_{j\:\mathrm{times}}}$ for the corresponding elementary symmetric polynomial.

\begin{lem}
	\label{schur_bound}
	For every nonempty partition $\lambda$,
	\[
		s_\lambda \le e_1^{|\lambda|}.
	\]
\end{lem}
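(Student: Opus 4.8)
The plan is to prove $s_\lambda \le e_1^{|\lambda|}$ by showing that the coefficient-wise difference $e_1^{|\lambda|} - s_\lambda$ is Schur positive, equivalently that $s_\lambda$ appears in $e_1^{|\lambda|} = s_{(1)}^{|\lambda|}$ with coefficient at least $1$, and that every other Schur function in the expansion has nonnegative coefficient (which is automatic, since products of Schur functions are Schur positive by the Littlewood–Richardson rule~\eqref{LR_mult}). So the entire content is: the coefficient of $s_\lambda$ in $e_1^{|\lambda|}$ is positive.

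To see this, I would expand $e_1^{|\lambda|} = s_{(1)}^{\otimes |\lambda|}$ iteratively using the Pieri rule: multiplying by $s_{(1)} = e_1$ adds a single box in all possible ways that keep a partition shape. Thus the coefficient of $s_\lambda$ in $e_1^m$ with $m = |\lambda|$ counts the number of chains
\[
	\emptyset = \mu^{(0)} \subset \mu^{(1)} \subset \cdots \subset \mu^{(m)} = \lambda
\]
in Young's lattice where each $\mu^{(i)}$ is obtained from $\mu^{(i-1)}$ by adding one box — that is, the number of standard Young tableaux of shape $\lambda$, which is $f^\lambda \ge 1$. Hence the coefficient of $s_\lambda$ in $e_1^{|\lambda|}$ is $f^\lambda \ge 1$, so $e_1^{|\lambda|} - s_\lambda = (f^\lambda - 1) s_\lambda + (\text{other Schur functions with nonnegative coefficients})$ is Schur positive, giving $s_\lambda \le e_1^{|\lambda|}$.

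Alternatively, and perhaps more cleanly for the writeup, one can argue representation-theoretically: $e_1^{|\lambda|}$ is the character of $V^{\otimes |\lambda|}$ where $V = \C^n$ is the standard representation of $\GL(n,\C)$, and by Schur–Weyl duality this decomposes as $\bigoplus_\mu (S^\mu V)^{\oplus f^\mu}$ over partitions $\mu$ of $|\lambda|$ with at most $n$ rows; since $\lambda$ is such a partition and $f^\lambda \ge 1$, the irreducible $S^\lambda V$ occurs, so $s_\lambda$ is a summand of $e_1^{|\lambda|}$ in $\Schur_n$.

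There is essentially no obstacle here; the only point requiring care is the length condition, namely that $\lambda$ really does index a basis element of $\Schur_n$ (length $\le n$), which is exactly the hypothesis under which the statement is made — and the Pieri/Schur–Weyl argument only ever produces partitions of length $\le n$ along the way, so everything stays within $\Schur_n$.
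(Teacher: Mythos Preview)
Your proof is correct and takes essentially the same approach as the paper: both iterate the Pieri rule to see that $s_\lambda$ occurs as a summand of $e_1^{|\lambda|}$. The paper packages this as a short induction on $|\lambda|$ (choosing $\nu$ with $\lambda = \nu + \text{box}$ and using $s_\lambda \le s_\nu\, e_1 \le e_1^{|\nu|+1}$), whereas you unfold the full expansion and identify the coefficient of $s_\lambda$ as the number $f^\lambda \ge 1$ of standard Young tableaux.
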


\begin{proof}
	By the Pieri rule, we have $c^\lambda_{(1)\, \nu} = 1$ if and only if $\lambda$ arises from $\nu$ by adding a single box, i.e.~if there is a row index $j$ such that $j = 0$ or $\nu_{j-1} > \nu_j$, and
	\[
		\lambda_i = \begin{cases}	\nu_i + 1 & \textrm{ if } i = j, \\
						\nu_i & \textrm{ if } i \neq j. \end{cases}
	\]
	We now prove the claim by induction on the size of $\lambda$.
	The statement is trivial for $\lambda = (1)$, which is the smallest nonempty partition, since $s_{(1)} = e_1$.
	For the induction step, let $\nu$ be such that $\lambda$ arises from $\nu$ by adding a single box as above.
	Then assuming $s_\nu \le e_1^{|\nu|}$ by induction hypothesis gives
	\[
		s_\lambda \le s_\nu e_1 \le e_1^{|\nu|+1},
	\]
	where the first inequality is by $c^\lambda_{(1)\, \nu} = 1$.
	This coincides with the claim by $|\lambda| = |\nu| + 1$.
\end{proof}

\begin{prop}
	\label{real_schur}
	The monotone homomorphisms $\Schur_n \to \R_+$ with trivial kernel can be parametrized by $y \in \R^n_{>0}$ and are given by the evaluation maps
	\begin{align*}
		\Schur_n & \longrightarrow \R_+, \\
		p & \longmapsto p(y).
	\end{align*}
\end{prop}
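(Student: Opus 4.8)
The plan is to show two things: (i) every point $y \in \R^n_{>0}$ gives a monotone homomorphism $\Schur_n \to \R_+$ with trivial kernel via $p \mapsto p(y)$, and (ii) every monotone homomorphism with trivial kernel arises this way. Direction (i) is essentially routine: evaluation at a point is always a ring homomorphism, and it is monotone because a Schur positive polynomial, being a nonnegative integer combination of Schur polynomials, takes nonnegative values on $\R^n_{>0}$ (Schur polynomials are nonnegative on the positive orthant, e.g.\ by their combinatorial expansion as a sum of monomials with nonnegative coefficients). The kernel is trivial because a nonzero Schur positive polynomial is a genuine sum of monomials with positive coefficients, hence strictly positive at any $y \in \R^n_{>0}$; in particular no nonzero element is sent to $0$.

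For direction (ii), let $\phi : \Schur_n \to \R_+$ be a monotone homomorphism with trivial kernel. The natural candidate is $y_i \coloneqq \phi(x_i)$ where $x_i = s_{(1,\dots,1)}$-type generators are not quite available, so instead I would use the elementary symmetric polynomials: $\Schur_n$ is generated as a semiring by $e_1, \dots, e_n$ (indeed these generate the ring of symmetric polynomials, and every Schur polynomial is an integer — in fact, by the dual Jacobi–Trudi / Pieri considerations a \emph{nonnegative} integer — combination, though one must be slightly careful since subtraction is not available; the cleanest route is that $\Schur_n$ sits inside $\Lambda_n \otimes \Q$, the homomorphism extends to $\Q$-algebras, and a $\Q$-algebra homomorphism $\Lambda_n \otimes \Q \to \R$ is determined by the images of $e_1,\dots,e_n$). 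So $\phi$ is determined by the $n$ real numbers $\phi(e_1), \dots, \phi(e_n)$, and conversely any assignment extends to a $\Q$-algebra homomorphism on $\Lambda_n \otimes \Q$; the content is to show that the only such extensions that are \emph{monotone} and restrict to a homomorphism $\Schur_n \to \R_+$ are the evaluations at positive points.

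The crucial step — and the one I expect to be the main obstacle — is showing that the tuple $(\phi(e_1), \dots, \phi(e_n))$ is of the form $(e_1(y), \dots, e_n(y))$ for some $y \in \R^n_{>0}$, equivalently that the monic polynomial $t^n - \phi(e_1) t^{n-1} + \cdots + (-1)^n \phi(e_n)$ has $n$ \emph{positive real} roots. Reality of the roots is the delicate point (this is exactly where Stanley's MathOverflow input is credited). The idea is to exploit monotonicity against a rich supply of Schur positive inequalities: for instance, $0 \le \phi(s_\lambda)$ for all $\lambda$ already forces many Newton-type and log-concavity constraints on the $\phi(e_j)$, and one can use \Cref{schur_bound} together with power-universality-flavored bounds to control growth. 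Concretely, I would argue that if the polynomial had a non-real root or a non-positive real root, one could exhibit a specific Schur positive polynomial $p$ (built from products of $e_j$'s minus a Schur combination, which is Schur positive by a classical identity) whose evaluation under $\phi$ is forced to be negative, contradicting monotonicity; alternatively, one identifies a sequence $s_{\lambda}$ along which $\phi(s_\lambda)$ would be forced negative. Once reality and positivity of all roots $y_1, \dots, y_n$ is established, it follows that $\phi$ and evaluation-at-$y$ agree on $e_1, \dots, e_n$, hence on all of $\Schur_n$, and triviality of the kernel of $\phi$ forces $y \in \R^n_{>0}$ (if some $y_i = 0$, then $e_n = s_{(1,\dots,1)} \ne 0$ lies in the kernel). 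This completes the parametrization, with injectivity of $y \mapsto (\,\cdot\,\mapsto p(y))$ being clear since the $y_i$ are recovered as the roots of the characteristic polynomial of the $e_j$-values.
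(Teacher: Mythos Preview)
Your overall architecture is the same as the paper's: direction~(i) is routine, and for direction~(ii) you extend $\phi$ to a ring homomorphism $\Lambda_n \to \R$, form the monic degree-$n$ polynomial with coefficients $\phi(e_i)$, and argue that its roots $y_1,\dots,y_n$ are positive reals, after which $\phi$ agrees with evaluation at $y$ on the $e_i$ and hence everywhere. All of that matches.

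The genuine gap is exactly where you flag it: you do not actually prove that the roots are positive reals. Your plan---``exhibit a specific Schur positive polynomial whose $\phi$-value is forced negative'' or ``find a sequence $s_\lambda$ along which $\phi(s_\lambda)$ becomes negative''---is a restatement of the goal rather than an argument, and it is not at all clear how to carry it out by ad hoc inequalities among the $\phi(e_j)$. Newton-type and log-concavity constraints derived from individual $\phi(s_\lambda)\ge 0$ are far from sufficient to force all roots real; this is precisely the hard step that the paper credits to Stanley.

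The missing idea is the following. Consider the infinite Toeplitz matrix $H = (\phi(e_{i-j}))_{i,j\in\N}$, with the convention $e_k=0$ for $k\notin\{0,\dots,n\}$. By the second Jacobi--Trudi formula, every minor of $H$ is of the form $\phi(s_{\lambda/\mu})$ for some skew shape $\lambda/\mu$. Since skew Schur polynomials are Schur positive, and $\phi$ has trivial kernel, all minors are strictly positive, i.e.\ $H$ is totally positive. A classical theorem of Aissen--Schoenberg--Whitney (see the paper's reference~\cite{totallypositive}) then says that total positivity of this Toeplitz matrix is equivalent to the polynomial $\sum_i \phi(e_i)\,t^{n-i}$ having only real nonpositive roots, i.e.\ $y_j\in\R_{>0}$. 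This single stroke replaces the unspecified ``specific Schur positive polynomial'' in your sketch: the relevant Schur positive witnesses are exactly the skew Schur polynomials, organized via Jacobi--Trudi into the minors whose positivity is the total positivity hypothesis.
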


We thank Richard Stanley for the most difficult argument in the following proof, namely the final step of establishing $y_j \in \R$ based on~\cite{totallypositive}.\footnote{See also \href{https://mathoverflow.net/questions/419823/nonnegativity-locus-of-schur-polynomials}{https://mathoverflow.net/questions/419823/nonnegativity-locus-of-schur-polynomials}.}

\begin{proof}
	The fact that these maps take nonnegative values and are monotone is immediate by the sum over semistandard Young tablaux formula for the Schur polynomials $s_\lambda$, and it is obvious that they are homomorphisms. Also trivial kernel is obvious by $y_i > 0$ for all $i$.

	Conversely, let $\phi : \Schur_n \to \R_+$ be any monotone homomorphism with trivial kernel.
	With $\Lambda_n$ the ring of symmetric polynomials in $n$ variables, we have $\Lambda_n = \Schur_n - \Schur_n$ as the Schur polynomials form a basis.
	Therefore $\phi$ uniquely extends to a ring homomorphism $\Lambda_n \to \R$, which we also denote $\phi$ by abuse of notation, such that $\phi(s_\lambda) \ge 0$ for all partitions $\lambda$ of length $\ell(\lambda) \le n$.
	Consider now the monic single-variable real polynomial given by
	\[
		p(t) \coloneqq \sum_{i=0}^n \phi(e_i) \, t^{n-i} = \prod_{j=1}^n (t + y_j),
	\]
	where the $y_j$ are defined by the second equation, with the $-y_j$ being the roots of $p$.
	These are a priori complex numbers where the non-real ones come in complex conjugate pairs.
	However, we claim that actually $y_j \in \R_{>0}$ for all $j$, and that $\phi$ is given by the evaluation homomorphism at $y = (y_1,\ldots,y_n)$.
	Starting with the latter, we have $\phi(e_i) = e_i(y)$ for all $i$ by Vieta's formulas, and $\phi(f) = f(y)$ for all $f \in \Lambda_n$ then follows by the fundamental theorem on symmetric polynomials.

	It remains to be shown that $y_j \in \R_{>0}$.
	By a result of Aissen and Whitney~\cite[6.]{totallypositive}\footnote{See~\cite{totallypositiveI} for the proof of this result announced in~\cite{totallypositive}.}, our claim that $p$ has real and negative roots is equivalent to the Hankel matrix of its coefficients
	\[
		H \coloneqq \left( \phi(e_{i-j}) \right)_{i,j \in \N}
	\]
	being totally positive, where our notation uses padding by zeros for the elementary symmetric polynomials with index not in $\{0,\ldots,n\}$.
	But the nonzero minors of $H$ are all of the form $\phi(s_{\lambda / \mu})$ by the second Jacobi-Trudi formula~\cite[Corollary~7.16.2]{stanley2}, and the skew Schur polynomials $s_{\lambda / \mu}$ are well-known to be Schur positive, giving $\phi(s_{\lambda / \mu}) > 0$ by the trivial kernel assumption. It follows that $H$ is indeed totally positive.
\end{proof}

Although we will not need this, Vieta's formulas together with the fact that the coefficients of a polynomial uniquely determine its roots also implies that $y, y' \in \R_+^n$ define the same evaluation homomorphism if and only if they differ by a coordinate permutation.

\begin{prop}
	\label{trop_schur}
	The monotone homomorphisms $\Schur_n \to \TR_+$ with trivial kernel can be parametrized by $y \in \R^n$ and are given by \smallskip
	\begin{align}
		\begin{split}
			\label{trop_schur_eq}
			\Schur_n & \longrightarrow \TR_+, \\
			\sum_{\alpha \in \N^n} p_\alpha x^\alpha & \longmapsto \max_{\alpha \in \N^n \::\: p_\alpha \neq 0} \; \sum_{i=1}^n \alpha_i y_i.
		\end{split}
	\end{align}
\end{prop}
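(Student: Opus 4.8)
The plan is to mirror the structure of the proof of \Cref{real_schur}, replacing ``evaluation at a point of $\R_{>0}^n$'' by ``tropical evaluation''. First I would check the easy direction: given $y \in \R^n$, the map in~\eqref{trop_schur_eq} is well-defined because every element of $\Schur_n$ is a \emph{nonzero} polynomial with only finitely many monomials, so the maximum is over a nonempty finite set; it respects the tropical operations because the monomials of a product $pq$ are products of monomials of $p$ and $q$ (the Schur-positivity guarantees no cancellation, so the support of $pq$ is exactly the sumset of the supports), and $\max$ of a sumset is the sum of the maxima; it is monotone because passing from $p$ to $q$ with $q - p$ Schur positive only enlarges the support; and it has trivial kernel because no nonzero element maps to $-\infty$ (the support is never empty). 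The unit $s_{()} = 1 = x^0$ maps to $0$, the additive unit of $\TR_+$, as required.

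For the converse, let $\psi \: \Schur_n \to \TR_+$ be a monotone homomorphism with trivial kernel. The key point is that $\psi$ is determined by its values on a generating set, and a convenient one consists of the variables $x_1, \dots, x_n$ themselves — but these are not individually symmetric, so instead I would use the elementary symmetric polynomials $e_1, \dots, e_n$, which generate $\Lambda_n$ as a ring and lie in $\Schur_n$. Set $\eta_j \coloneqq \psi(e_j) \in \R$ (finite by trivial kernel). Since $e_j = \sum_{i_1 < \dots < i_j} x_{i_1}\cdots x_{i_j}$ is a sum of $\binom{n}{j}$ squarefree monomials, monotonicity and the homomorphism property pin down $\psi$ on all products of the $e_j$'s, hence on a spanning set of $\Lambda_n$ over $\Z$; so it suffices to produce $y \in \R^n$ with $\psi(e_j) = \max_{i_1 < \dots < i_j}(y_{i_1} + \dots + y_{i_j})$ for every $j$, and then to verify that this $y$ reproduces $\psi$ on all of $\Schur_n$, not just on the subsemiring generated by the $e_j$. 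The first task is a purely tropical/combinatorial one: I would argue that the sequence $\eta_1, \dots, \eta_n$ arising this way must satisfy the ``tropical Newton inequalities'' forcing it to be, up to reordering, of the form $\eta_j = y_1 + \dots + y_j$ with $y_1 \ge y_2 \ge \dots \ge y_n$ — i.e.\ $\eta_j - \eta_{j-1}$ is nonincreasing in $j$, which is the tropicalization of the log-concavity of the $e_j$; this should come out of applying $\psi$ to genuine Schur-positive identities such as $e_{j-1}e_{j+1} \le e_j^2$ (valid in $\Schur_n$ because $e_j^2 - e_{j-1}e_{j+1}$ is Schur positive), together with $e_0 = 1$. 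Reconstructing $y$ from $(\eta_j)$ is then immediate: $y_j = \eta_j - \eta_{j-1}$.

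The main obstacle I anticipate is the final verification that the $y$ so constructed actually computes $\psi(s_\lambda)$ — equivalently $\psi(x^\alpha)$ — correctly for \emph{every} monomial appearing, i.e.\ that $\psi$ really is the full tropical evaluation and not merely some extension that happens to agree on the $e_j$'s. Here the clean way is to note that the formula~\eqref{trop_schur_eq} at $y$, call it $\psi_y$, is itself a monotone homomorphism with trivial kernel agreeing with $\psi$ on the generators $e_1, \dots, e_n$; since these generate $\Lambda_n$ as a \emph{ring}, the two extensions of a ring homomorphism $\Lambda_n \to \R$ would coincide — but $\psi$ and $\psi_y$ are only semiring maps to $\TR_+$, where there is no subtraction, so one cannot directly invoke the ring-extension argument as in \Cref{real_schur}. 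The resolution is to observe that a monotone semiring homomorphism $\Schur_n \to \TR_+$ is the same data as a monotone semiring homomorphism to the idempotent semiring, hence factors through the ``tropical rescaling'' and is controlled by its behaviour on a \emph{monomial basis}; more concretely, since each $s_\lambda$ satisfies Schur-positive bounds both from above (\Cref{schur_bound}: $s_\lambda \le e_1^{|\lambda|}$) and, dually, can be bounded below by products of $e_j$'s realizing any chosen monomial in its support, monotonicity squeezes $\psi(s_\lambda)$ between the corresponding tropical linear forms, forcing $\psi(s_\lambda) = \max_{\alpha \in \mathrm{supp}(s_\lambda)} \langle \alpha, y\rangle = \psi_y(s_\lambda)$. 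Making the lower bound precise — i.e.\ exhibiting, for the leading monomial $x^\alpha$ of $s_\lambda$ in the direction $y$, a Schur-positive inequality of the form $(\text{monomial-like product of } e_j\text{'s}) \le s_\lambda$ — is the technical heart and the step I would budget the most care for; it amounts to a Pieri-type decomposition dual to \Cref{schur_bound}.
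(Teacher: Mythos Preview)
Your outline matches the paper's proof closely up through the definition $y_j \coloneqq \psi(e_j) - \psi(e_{j-1})$ and the concavity $\psi(e_{j-1}) + \psi(e_{j+1}) \le 2\psi(e_j)$ coming from the Schur-positive inequality $e_{j-1}e_{j+1} \le e_j^2$. The paper proceeds exactly this way.

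The genuine gap is in your final ``squeezing'' step. The lower bound you propose---a Schur-positive inequality of the form $(\text{product of }e_j\text{'s}) \le s_\lambda$ realizing the leading monomial---cannot exist: in $\Schur_n$ the relation $p \le s_\lambda$ means $s_\lambda - p$ is Schur positive, and since $s_\lambda$ is a single Schur polynomial with coefficient $1$, the only possibilities are $p = 0$ and $p = s_\lambda$. So there is no nontrivial product of $e_j$'s below $s_\lambda$, and the squeeze cannot close from below. (Your upper bound via \Cref{schur_bound}, namely $s_\lambda \le e_1^{|\lambda|}$, is also too weak: it yields $\psi(s_\lambda) \le |\lambda|\,y_1$, which is strictly larger than $\psi_y(s_\lambda) = \sum_i \lambda_i y_i$ whenever $\lambda$ has more than one row. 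The sharp upper bound $s_\lambda \le \prod_j e_{\lambda'_j}$ does work, but that is still only one half of the squeeze.)

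The paper's resolution is not a squeeze but an exact formula, obtained through a different key lemma: $\psi$ is monotone with respect to the \emph{dominance} order on partitions, i.e.\ $\mu \trianglelefteq \nu$ implies $\psi(s_\mu) \le \psi(s_\nu)$. This is proved by a Pieri-type argument: if $\mu$ is obtained from $\nu$ by moving one box down, then for $m$ large enough every Schur component of $s_\mu e_1^m$ also appears in $s_\nu e_1^m$, and applying $\psi$ and cancelling $m\psi(e_1)$ gives the claim. From dominance monotonicity one gets the additivity $\psi(s_{\mu+\nu}) = \psi(s_\mu) + \psi(s_\nu)$ (since $\mu+\nu$ is dominance-maximal among the $\lambda$ with $c^\lambda_{\mu\nu} \neq 0$ and $\psi(s_\mu s_\nu) = \max_\lambda \psi(s_\lambda)$ over such $\lambda$), and decomposing $\lambda$ as a sum of its columns immediately yields the exact identity $\psi(s_\lambda) = \sum_j \psi(e_{\lambda'_j}) = \sum_i \lambda_i y_i = \psi_y(s_\lambda)$. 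This dominance-monotonicity step is the missing idea in your proposal.
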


In other words, these homomorphisms are given by linear maximization over the Newton polytope of $p$ in a fixed direction specified by $y$.
This is similar to the case of polynomial semirings~\cite[Example~2.9]{vergleichsstellensatz}.

\begin{proof}
	We first argue that every such map, denote it $\psi_y$, is a monotone homomorphism with trivial kernel.
	Monotonicity of $\psi_y$ is obvious, since $p \le q$ implies that the Newton polytope of $p$ is contained in that of $q$.
	The fact that the Newton polytope of a product of two polynomials with nonnegative coefficients is the Minkowski sum of their Newton polytopes implies that $\psi_y$ is multiplicative.
	Additivity is a consequence of the Newton polytope of a sum being the convex hull of their unions.
	Hence $\psi_y$ is a monotone homomorphism, and it has trivial kernel since it takes the value $-\infty$ only on $p = 0$.

	Conversely, suppose that $\psi : \Schur_n \to \TR_+$ is a monotone homomorphism with trivial kernel.
	We first make some preliminary observations.
	\begin{itemize}
		\item The dual Pieri rule~\cite[p.~340]{stanley2}: the product of a Schur polynomial $s_\mu$ with an elementary symmetric polynomial $e_j$ decomposes as
			\beq
				\label{se_prod}
				s_\mu e_j = \sum_\lambda s_\lambda,
			\eeq
			where the sum is over all partitions $\lambda$ that can be obtained from $\mu$ by adding $j$ boxes in distinct rows.
		\item Applying $\psi$ to the multiplication rule~\eqref{LR_mult} yields
			\beq
				\label{psiadd}
				\psi(s_\mu) + \psi(s_\nu) \,=\, \max_{\lambda \: : \:  c^\lambda_{\mu\nu} \neq 0} \psi(s_\lambda)
			\eeq
			for all partitions $\mu$ and $\nu$, where throughout we only consider partitions of length $\le n$.
		\item $\psi$ is monotone with respect to dominance ordering:
			\[
				\mu \trianglelefteq \nu \quad \Longrightarrow \quad \psi(s_\mu) \le \psi(s_\nu).
			\]
			
			To see this, we can assume without loss of generality that $\mu$ is obtained from $\nu$ by moving a single box from row $i$ down to row $j > i$, meaning that $\mu$ and $\nu$ coincide in all rows apart from
			\[
				\mu_i = \nu_i - 1, \qquad \mu_j = \nu_j + 1,
			\]
			and $j = i + 1$ (box moves down by one row and left by any number of columns) or $\mu_i = \mu_j$ (box moves left by one column and down any number of rows)~\cite[Proposition~2.3]{brylawski}.
			Then any way of adding at least $m \coloneqq \sum_{k < \nu_i} (n - \nu'_k)$ boxes to $\mu$ so as to obtain a new partition necessarily also adds the extra box at $(i, \nu_i)$.
			By repeated application of the Pieri rule, it follows that every Schur polynomial that occurs in the expansion of $s_\mu e_1^m$ also occurs in the expansion of $s_\nu e_1^m$.
			Applying $\psi$ to this statement gives
			\[
				\psi(s_\mu) + m \psi(e_1) \,\le\, \psi(s_\nu) + m \psi(e_1),
			\]
			and hence also the desired $\psi(s_\mu) \le \psi(s_\nu)$.
		\item With $\mu + \nu$ denoting the usual sum of partitions, we have\footnote{We owe this observation, which facilitates a more insightful line of proof than our original one in the next item, to an anonymous referee.}
			\[
				\psi(s_{\mu + \nu}) = \psi(s_\mu) + \psi(s_\nu).
			\]

			Indeed this follows from the previous two items upon noting that $\mu + \nu$ is dominance maximal among all partitions $\lambda$ with $c^{\lambda}_{\mu\nu} \neq 0$ by the Littlewood-Richardson rule.
		\item For every $\lambda$,
			\beq
				\label{psi_se}
				\psi(s_\lambda) \,=\, \sum_{j=1}^{\lambda_1} \psi(e_{\lambda'_j}).
			\eeq

			This is a direct consequence of the previous item applied to the decomposition of $\lambda$ as a sum of columns.
		\item For all $j = 1, \ldots, n$,\footnote{We again use the conventions $e_0 = 1$ and $e_{n+1} = 0$.}
			\beq
				\label{submod}
				\psi(e_{j+1}) + \psi(e_{j-1}) \le 2 \psi(e_j).
			\eeq
			
			Indeed by~\eqref{se_prod}, every term in the Schur polynomial expansion of $e_{j-1} e_{j+1}$ also appears in the Schur polynomial expansion of $e_j e_j$. (In fact $e_j e_j = e_{j-1} e_{j+1} + s_{\scriptsize{(2,\ldots,2)}}$.)
	\end{itemize}
	To start the main argument, for every $f \in \Schur_n$ we have the unique decomposition into Schur polynomials
	\[
		f \,=\, \sum_\lambda f_\lambda s_\lambda
	\]
	with coefficients $f_\lambda \in \N$. Hence for the actual claim, for a given $\psi : \Schur_n \to \TR_+$ it is enough to show that there is $y \in \R^n$ such that
	\beq
		\label{psi_y}
		\psi(s_\lambda) \,=\, \psi_y(s_\lambda)
	\eeq
	for all $\lambda$. To this end, for every $j = 1,\ldots,n$ we put\footnote{Note that this is where the assumption that $\psi$ has trivial kernel comes in: it guarantees $\psi(e_{j-1}) > -\infty$.}
	\[
		y_j \,\coloneqq\, \psi(e_j) - \psi(e_{j-1}),
	\]
	where the second term vanishes for $j = 1$ since $e_0 = 1$.
	We then have $y_1 \ge \cdots \ge y_n$ thanks to~\eqref{submod}, and
	\beq
		\label{psiey}
		\psi(e_j) = \sum_{i=1}^j y_i
	\eeq
	by definition.
	By the sum over semistandard Young tablaux formula for $s_\lambda$, we can evaluate $\psi_y(s_\lambda)$ directly using the definition~\eqref{trop_schur_eq}.
	Because of $y_1 \ge \cdots \ge y_n$, the Young tablaux that attains the maximum is the minimal one, namely the one in which the $i$'th row is filled with only $i$'s for every $i = 1,\ldots,\ell(\lambda)$, so that
	\[
		\psi_y(s_\lambda) \,=\, \sum_{i=1}^{\ell(\lambda)} \lambda_i y_i.
	\]
	But then by the definition of the $y_i$ and~\eqref{psi_se}, we have
	\[
		\psi_y(s_\lambda) \,=\, \sum_{i=1}^{\ell(\lambda)} \lambda_i (\psi(e_i) - \psi(e_{i-1})) \,=\, \sum_{i=1}^{\ell(\lambda)} (\lambda_i - \lambda_{i+1}) \psi(e_i) \,=\, \sum_{j=1}^{\lambda_1} \psi(e_{\lambda'_j}) \,=\, \psi(s_\lambda),
	\]
	where the second to last equation is elementary and the last one is by~\eqref{psi_se}.
	This proves the desired~\eqref{psi_y}.
\end{proof}

\begin{rem}
	Our \Cref{vss} does not directly apply to $\Schur_n$ (or equivalently to the representation theory of $\GL(n,\C)$), since $\Schur_n$ does not have a power universal element: there is no $u$ such that $e_1 u^k \ge 1$ for any $k$, because expanding the left-hand side into Schur polynomials does not produce any constant terms.
	This is analogous to a polynomial semiring like $\R_+[x]$ not having a power universal element~\cite[Example~3.37]{vergleichsstellensatz}.
\end{rem}

\section{Proof of \texorpdfstring{\Cref{main}}{Theorem 1.1}}
\label{sec_proof}

We write $\Rep{\SL(n,\C)}$ for the set of isomorphism classes of representations, considered as a semiring with respect to direct sum as addition and tensor product as multiplication.
It is preordered with respect to containment of representations (up to isomorphism).

Next, we will identify $\Rep{\SL(n,\C)}$ with a preordered semiring constructed from $\Schur_n$.
We write
\[
	T_n \,\coloneqq\, \{ x \in \C^n \mid x_1 \cdots x_n = 1\} \,=\, \{ x \in \C^n \mid e_n(x) = 1\},
\]
and identify this variety with the maximal torus of $\SL(n,\C)$ consisting of the diagonal matrices.
Let now $\Schur_n^S$ be the semiring obtained by quotienting $\Schur_n$ by the semiring congruence generated by
\[
	e_n \sim 1.
\]
Under the isomorphism $\Rep{\GL(n,\C)} \cong \Schur_n$, this identifies the determinant representation with the trivial representation.

\begin{lem}
	Considering every $f \in \Schur_n^S$ as a function on $T_n$ identifies $\Schur_n^S$ with a subsemiring of the ring of regular functions on $T_n$.
\end{lem}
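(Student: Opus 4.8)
The plan is to exhibit the identification explicitly and then show it is injective. Restriction of polynomials to the subvariety $T_n \subseteq \C^n$ is a homomorphism of semirings $\mathrm{ev} : \Schur_n \to \mathcal{O}(T_n)$ into the ring $\mathcal{O}(T_n)$ of regular functions on $T_n$: it is the restriction of the ring homomorphism $\Lambda_n \to \mathcal{O}(T_n)$, so it preserves products and sums, hence also $\N$-linear combinations and products of Schur polynomials. Since $e_n = x_1 \cdots x_n$ takes the constant value $1$ on $T_n$, we have $\mathrm{ev}(e_n) = 1 = \mathrm{ev}(1)$, so $\mathrm{ev}$ respects the relation $e_n \sim 1$ that generates the congruence defining $\Schur_n^S$, and therefore factors uniquely as $\mathrm{ev} = \overline{\mathrm{ev}} \circ \pi$ through a semiring homomorphism $\overline{\mathrm{ev}} : \Schur_n^S \to \mathcal{O}(T_n)$, where $\pi : \Schur_n \to \Schur_n^S$ is the quotient map. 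The content of the lemma is exactly that $\overline{\mathrm{ev}}$ is injective, since an injective semiring homomorphism identifies its domain with the subsemiring of $\mathcal{O}(T_n)$ given by its image.

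First I would produce convenient representatives in $\Schur_n$ for the classes in $\Schur_n^S$. From the bialternant formula for Schur polynomials in $n$ variables---equivalently, from the fact that the leftmost column of a semistandard tableau of shape $\lambda$ with $\lambda_n \ge 1$ is forced to be $1, 2, \ldots, n$---one gets the identity $s_\lambda = e_n^{\lambda_n}\, s_{\lambda^\circ}$ in $\Schur_n$, where $\lambda^\circ \coloneqq (\lambda_1 - \lambda_n, \ldots, \lambda_{n-1} - \lambda_n, 0)$ is obtained from $\lambda$ by removing all columns of height $n$. Applying $\pi$ and using $\pi(e_n) = 1$, this shows that every element of $\Schur_n^S$ is $\pi(f)$ for some \emph{column-reduced} $f \in \Schur_n$, meaning an $\N$-linear combination $\sum_\mu p_\mu s_\mu$ of Schur polynomials over partitions $\mu$ with $\mu_n = 0$, i.e.~with $\ell(\mu) \le n-1$.

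The heart of the matter is the linear-algebra claim that the functions $s_\mu|_{T_n}$, for $\mu$ ranging over partitions with $\ell(\mu) \le n-1$, are linearly independent over $\C$; this is what makes passing to column-reduced representatives lose no information. I would prove it by a leading-term argument in the Laurent-monomial basis $\{\, x_1^{\beta_1} \cdots x_{n-1}^{\beta_{n-1}} : \beta \in \Z^{n-1} \,\}$ of $\mathcal{O}(T_n) \cong \C[x_1^{\pm}, \ldots, x_{n-1}^{\pm}]$ obtained by eliminating $x_n = (x_1 \cdots x_{n-1})^{-1}$, using on $\Z^{n-1}$ the total order that compares the coordinate sum first and breaks ties lexicographically. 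A monomial $x^\alpha$ of $s_\mu$ (so $\sum_i \alpha_i = |\mu|$) restricts to $x_1^{\alpha_1 - \alpha_n} \cdots x_{n-1}^{\alpha_{n-1} - \alpha_n}$, whose coordinate sum is $|\mu| - n\alpha_n \le |\mu|$, with equality precisely when $\alpha_n = 0$; and the monomials of $s_\mu$ with $\alpha_n = 0$ are exactly those of the Schur polynomial $s_\mu(x_1, \ldots, x_{n-1})$ in $n-1$ variables (nonzero because $\ell(\mu) \le n-1$), whose lexicographically largest monomial is $x_1^{\mu_1} \cdots x_{n-1}^{\mu_{n-1}}$ with coefficient $1$. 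Thus $s_\mu|_{T_n}$ has a unique maximal monomial $x_1^{\mu_1} \cdots x_{n-1}^{\mu_{n-1}}$, with coefficient $1$; since $\mu \mapsto (\mu_1, \ldots, \mu_{n-1})$ is injective on column-reduced partitions, these leading monomials are pairwise distinct, and the standard triangularity argument yields the linear independence.

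Putting it together, if $x, y \in \Schur_n^S$ satisfy $\overline{\mathrm{ev}}(x) = \overline{\mathrm{ev}}(y)$, pick column-reduced representatives $f = \sum_\mu p_\mu s_\mu$ and $g = \sum_\mu q_\mu s_\mu$; then $f|_{T_n} = \mathrm{ev}(f) = \overline{\mathrm{ev}}(x) = \overline{\mathrm{ev}}(y) = g|_{T_n}$, so the linear independence forces $p_\mu = q_\mu$ for all $\mu$, hence $f = g$ in $\Schur_n$ and $x = y$. I expect the main obstacle to be precisely this linear-independence step, together with the conceptual point underlying it: $\Schur_n^S$ is a quotient of $\Schur_n$ by a semiring congruence rather than by an ideal, so one cannot simply say "the kernel of $\Lambda_n \to \mathcal{O}(T_n)$ is the ideal $(e_n - 1)$"---one must genuinely verify that no further identifications are introduced, which is exactly what the leading-monomial computation provides. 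A minor but essential subtlety is that $s_\mu(x_1,\ldots,x_{n-1})$ vanishes when $\ell(\mu) = n$, which is why the reduction must be to $\ell(\mu) \le n-1$ rather than merely $\ell(\mu) \le n$.
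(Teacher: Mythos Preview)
Your proof is correct and takes a genuinely different route from the paper's. The paper argues at the level of the ambient ring $\Lambda_n$: if representatives $f,g\in\Schur_n$ agree on $T_n$, then $h\coloneqq f-g\in\Lambda_n$ vanishes on $T_n$, and the fundamental theorem on symmetric polynomials is invoked to place $h$ in the ideal generated by $e_n-1$ (the paper writes ``$e_n$'', evidently a slip). You instead work intrinsically in the semiring: using $s_\lambda=e_n^{\lambda_n}s_{\lambda^\circ}$ you produce a canonical column-reduced representative for each class in $\Schur_n^S$, and then a leading-monomial argument in $\mathcal{O}(T_n)\cong\C[x_1^{\pm1},\ldots,x_{n-1}^{\pm1}]$ shows that distinct column-reduced elements restrict to distinct functions. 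What your approach buys is exactly the point you flag at the end: the step from ``$f-g$ lies in the ring ideal $(e_n-1)$'' to ``$f$ and $g$ are identified by the \emph{semiring} congruence generated by $e_n\sim1$'' is not automatic---it amounts to additive cancellativity of $\Schur_n^S$, which the paper leaves implicit---whereas your canonical representatives establish injectivity directly without ever needing to cancel. The paper's route is shorter and more conceptual once that gap is bridged; yours is longer but fully self-contained at the semiring level.
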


\begin{proof}
	Since $e_n = \prod_{i=1}^n x_i$ evaluates to $1$ on $T_n$, we have a well-defined homomorphism from $\Schur^S_n$ to the regular functions on $T_n$.
	To prove its injectivity, suppose that $f|_{T_n} = g|_{T_n}$ for representatives $f,g \in \Schur_n$.
	Then we have a symmetric polynomial $h \coloneqq f - g$ that vanishes on $T_n$.
	The fundamental theorem on symmetric polynomials now implies that $h$ is a multiple of the elementary symmetric polynomial $e_n = s_{(1,\ldots,1)}$.
\end{proof}

\begin{lem}
	$\Schur_n^S$ is a preordered semiring with $p \le q$ if and only if $q - p$ has a Schur positive representative.
\end{lem}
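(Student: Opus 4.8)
The statement is routine: the plan is to check well-definedness, the preorder axioms, and compatibility with the two semiring operations, with no deep input required.

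First I would pin down the meaning of the statement. By the previous lemma, $\Schur_n^S$ sits inside the ring $\mathcal{O}(T_n)$ of regular functions on $T_n$, the structure map being the composite $\Schur_n \to \Schur_n^S \hookrightarrow \mathcal{O}(T_n)$, $f \mapsto f|_{T_n}$, where the first arrow is a surjection. For $p, q \in \Schur_n^S$ the difference $q - p$ is then a well-defined element of $\mathcal{O}(T_n)$, and "$q - p$ has a Schur positive representative" means that there is a Schur positive polynomial $r \in \Schur_n$ with $r|_{T_n} = q - p$, equivalently $p + r|_{T_n} = q$ in $\Schur_n^S$. Read this way the relation depends only on $p$ and $q$, so well-definedness is immediate. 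Two facts will be used: every element of $\Schur_n^S$ has a Schur positive representative, since it is a quotient of $\Schur_n$, all of whose elements are Schur positive by construction; and a product of Schur positive polynomials is Schur positive, since the Littlewood--Richardson coefficients in~\eqref{LR_mult} are nonnegative.

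Next I would verify the preorder axioms and the monotonicity of $+$ and $\cdot$. Reflexivity holds because $q - q = 0$ and the zero polynomial is Schur positive. For transitivity, if $p \le q \le s$, choose Schur positive $r_1, r_2 \in \Schur_n$ with $r_1|_{T_n} = q - p$ and $r_2|_{T_n} = s - q$; then $r_1 + r_2$ is Schur positive and $(r_1 + r_2)|_{T_n} = s - p$, so $p \le s$. For monotonicity of addition, $p \le q$ witnessed by $r$ gives $(p + t) + r|_{T_n} = q + t$ for every $t$, so $p + t \le q + t$. For monotonicity of multiplication, given $p \le q$ witnessed by a Schur positive $r$ and an arbitrary $t \in \Schur_n^S$, pick a Schur positive representative $t_0 \in \Schur_n$ of $t$; then $r t_0$ is Schur positive and $p t + (r t_0)|_{T_n} = q t$, so $p t \le q t$. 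This proves that $\le$ makes $\Schur_n^S$ a preordered semiring.

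Finally, though not needed for the lemma, I would note that $\le$ is exactly the quotient preorder, i.e.\ the finest preorder on $\Schur_n^S$ making $\Schur_n \to \Schur_n^S$ monotone: the generating one-step relation "$b - a$ Schur positive in $\Schur_n$" descends to precisely the relation above, which is transitive by the argument already given. The only point that needs a little care is well-definedness with respect to the choice of representatives, which is why I would set things up through the inclusion into $\mathcal{O}(T_n)$ from the start; beyond that there is no real obstacle.
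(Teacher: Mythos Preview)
Your proposal is correct and follows essentially the same approach as the paper: the paper's proof spells out only the transitivity argument (sum of Schur positive representatives) and declares the remaining conditions straightforward, while you additionally write out reflexivity and the monotonicity of $+$ and $\cdot$, and handle well-definedness explicitly via the embedding into $\mathcal{O}(T_n)$. These are exactly the ``straightforward'' checks the paper omits, so the two arguments coincide in substance.
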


\begin{proof}
	For transitivity, suppose that $p \le q \le r$, so that both $q - p$ and $r - q$ have Schur positive representatives.
	Then their sum is a Schur positive representative of $r - p$, showing $p \le r$.
	All other relevant conditions are straightforward to verify.
\end{proof}

We henceforth consider $\Schur^S_n$ as a preordered semiring.
The following standard result then summarizes the well-understood representation theory in type $A_{n-1}$.

\begin{prop}
	\label{rep_iso}
	Associating to every representation $\rho : \SL(n,\C) \to \GL(V)$ the regular function on $T_n$ given by
	\beq
		\label{char_map}
		\chr_\rho(x_1,\ldots,x_n) \,\coloneqq\, \tr\left( \rho \left( \diag(x_1,\ldots,x_n) \right) \right)
	\eeq
	defines an isomorphism of preordered semirings $\Rep{\SU(n)} \stackrel{\cong}{\longrightarrow} \Schur_n^S$.
\end{prop}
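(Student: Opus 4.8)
The plan is to check that $\chr$ as defined by~\eqref{char_map} is a semiring homomorphism, that it is a bijection, and that bijectivity then already forces it to be an isomorphism of \emph{preordered} semirings. The last point is the quickest and I would dispatch it first: both preorders at hand are the \emph{additive} ones, in which $a \le b$ holds precisely when $b = a + c$ for some element $c$ of the semiring. On $\Rep{\SU(n)}$ this is because the category of representations is semisimple, so a subrepresentation has a complement and $\rho \hookrightarrow \sigma$ is equivalent to $\sigma \cong \rho \oplus \tau$ for some $\tau$; on $\Schur_n^S$ it is because its preorder is defined by $p \le q \iff q - p$ having a Schur positive representative, and the latter holds iff $q = p + r$ for $r \in \Schur_n^S$ the class of such a representative. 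Hence any semiring isomorphism between the two is automatically monotone in both directions, and the real work is to show $\chr$ is a bijective homomorphism.

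That $\chr$ is a homomorphism is immediate from additivity of the trace ($\chr_{\rho \oplus \sigma} = \chr_\rho + \chr_\sigma$), its multiplicativity on tensor products ($\chr_{\rho \otimes \sigma} = \chr_\rho \chr_\sigma$), and $\chr$ sending the trivial representation to $1$; by the lemma embedding $\Schur_n^S$ into the regular functions on $T_n$, it remains only to see that each character is such a regular function of the required form, which I reduce to the case of irreducibles. For surjectivity I would invoke the standard classification in type $A_{n-1}$: the irreducible representations of $\SL(n,\C)$ are parametrized by partitions $\lambda$ of length $\le n-1$, and the character of the irreducible $V_\lambda$, pulled back along~\eqref{char_map}, is the class of the Schur polynomial $s_\lambda$ (equivalently: restrict the polynomial $\GL(n,\C)$-representation of highest weight $\lambda$, using the isomorphism $\Rep{\GL(n,\C)} \cong \Schur_n$ and $\det \mapsto e_n$). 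Since every $s_\lambda$ with $\ell(\lambda) \le n$ is congruent modulo $e_n \sim 1$ to $s_{\lambda - \lambda_n(1,\ldots,1)}$, whose length is $\le n-1$, the classes $[s_\lambda]$ with $\ell(\lambda) \le n-1$ form an $\N$-basis of $\Schur_n^S$ and each is an irreducible character; taking direct sums shows $\chr$ is onto.

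For injectivity it is enough that these classes $[s_\lambda]$, $\ell(\lambda) \le n-1$, are $\Z$-linearly independent in $\Schur_n^S$, so that the multiplicities in a decomposition into irreducibles can be read off from the character and a representation is thereby determined up to isomorphism. This independence follows from the fundamental theorem on symmetric polynomials exactly as in the lemma embedding $\Schur_n^S$ into the regular functions on $T_n$: a $\Z$-combination of $s_\lambda$ with $\ell(\lambda) \le n-1$ that is a multiple of $e_n$ is a $\Z$-combination of Schur polynomials of length exactly $n$, hence vanishes by linear independence of the Schur basis of $\Lambda_n$. Bijectivity, together with the first paragraph, then yields the claimed isomorphism of preordered semirings.

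The only genuinely representation-theoretic ingredient is the identification of the irreducible $\SL(n,\C)$-characters with Schur polynomials on $T_n$; the rest is bookkeeping around the quotient by $e_n \sim 1$. I expect the main (mild) obstacle to be keeping that bookkeeping clean — in particular matching ``Schur positive representative in $\Schur_n^S$'' with ``honest direct sum of irreducibles'' — which is under control because multiplication by $e_n = s_{(1,\ldots,1)}$ on $\Schur_n$ is injective and merely shifts a partition by $(1,\ldots,1)$, so each congruence class has a transparent, unique Schur positive representative supported on partitions of length $\le n-1$.
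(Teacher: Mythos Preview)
Your proposal is correct and follows the same standard route the paper has in mind; the paper itself does not actually give a proof but simply records this as the well-understood representation theory in type $A_{n-1}$, only sketching the inverse map $[s_\lambda]\mapsto V_\lambda$. Your extra observation that both preorders are the additive ones (so that a bare semiring isomorphism is automatically an order isomorphism) is a clean way to handle the preorder compatibility that the paper leaves implicit.
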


Of course, the inverse isomorphism is given by decomposing a Schur positive polynomial $p$ into Schur polynomials, $p = \sum_\lambda p_\lambda s_\lambda$, constructing the direct sum of the associated irreducibles with multiplicities $p_\lambda$, and noting that this is well-defined on $\Schur_n^S$.

\begin{lem}
	\label{SchurS_pu}
	$u \coloneqq 1 + e_1$ is power universal in $\Schur_n^S$.
\end{lem}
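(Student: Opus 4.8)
The plan is to check the definition of power universality directly. First, $u \ge 1$ is clear, since $u - 1 = e_1 = s_{(1)}$ is Schur positive. By \Cref{pu_rem}\,(ii) it then suffices to produce, for each $a$ in a semiring generating set of $\Schur_n^S$ that contains $2 = 1+1$, an exponent $k \in \N$ with
\[
	a \le u^k, \qquad a u^k \ge 1.
\]
I would take the generating set $\{2\} \cup \{\, s_\lambda : \ell(\lambda) \le n \,\}$, using that every element of $\Schur_n^S$ is an $\N$-linear combination of the $s_\lambda$. The inequalities $a \le u^k$ are the easy half, handled by \Cref{schur_bound}; the substance lies in the inequalities $a u^k \ge 1$, and this is where the defining relation $e_n = 1$ of $\Schur_n^S$ is essential: since $e_n^{\,m} = 1$ in $\Schur_n^S$ for every $m$, it is enough to make a power of $e_n$ appear in the Schur expansion of $a u^k = a(1+e_1)^k$, and the tool for that is the Pieri rule.

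Concretely: for $a = 2$, take $k = n$. Since $(1+e_1)^n \ge 1 + e_1^{\,n}$ in $\Schur_n$, and $e_1^{\,n}$ contains the rectangular Schur polynomial $s_{(1^n)} = e_n$ with coefficient equal to the number of standard Young tableaux of the single column $(1^n)$, namely $1$, we get $u^n \ge 1 + e_n = 2$ in $\Schur_n^S$; this gives $2 \le u^n$, and then $2 u^n \ge u^n \ge 2 \ge 1$. For $a = s_\lambda$ with $\lambda$ nonempty, put $m \coloneqq \lambda_1$ and $k \coloneqq mn$, so that the diagram of $\lambda$ sits inside the $n \times m$ rectangle and $|\lambda| \le mn$. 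The first inequality is $s_\lambda \le e_1^{|\lambda|} \le u^{|\lambda|} \le u^{mn}$, using \Cref{schur_bound}, the relation $e_1 \le u$ (as $u - e_1 = 1$), and $u \ge 1$. For the second, $u^k = (1+e_1)^k \ge e_1^{\,mn - |\lambda|}$, and applying the Pieri rule $mn - |\lambda|$ times shows that the Schur expansion of $s_\lambda\, e_1^{\,mn - |\lambda|}$ contains $s_{(m^n)} = e_n^{\,m}$ with coefficient equal to the number of standard Young tableaux of the skew shape $(m^n)/\lambda$, which is positive because $\lambda \subseteq (m^n)$; hence $s_\lambda u^k \ge e_n^{\,m} = 1$ in $\Schur_n^S$. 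Finally $a = 1 = s_{()}$ is trivial, as $1 \le u$ and $u \ge 1$.

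Once $2$ and every $s_\lambda$ are shown to satisfy the two inequalities, \Cref{pu_rem}\,(ii) applies: the set of such elements is closed under addition and multiplication, hence contains all of $\Schur_n^S \setminus \{0\}$, and therefore $u$ is power universal. I expect the only genuinely nontrivial step to be the lower bounds $a u^k \ge 1$, for the reason above: before passing to the quotient there is no constant term anywhere in sight (indeed $\Schur_n$ itself has no power universal element), so the argument hinges on the elementary observation that multiplying a Schur polynomial by a sufficiently high power of $e_1$ and running the Pieri rule always reaches a rectangular shape with $n$ rows, i.e.\ a power of $e_n$.
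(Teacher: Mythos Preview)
Your proof is correct and follows essentially the same route as the paper: reduce via \Cref{pu_rem} to checking the two inequalities on $2$ and on each $s_\lambda$, use \Cref{schur_bound} for the upper bound, and for the lower bound use the Pieri rule to see that $s_\lambda e_1^{\,n\lambda_1 - |\lambda|}$ contains the rectangular term $s_{(\lambda_1^{\,n})} = e_n^{\,\lambda_1}$, which becomes $1$ in $\Schur_n^S$. The only cosmetic difference is that you package both bounds with the single exponent $k = n\lambda_1$ (using $u^{k} \ge e_1^{\,k - |\lambda|}$), whereas the paper records the two exponents separately.
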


This is equivalent to the standard representation-theoretic fact that every (not necessarily) irreducible representation is contained in a tensor power of the direct sum of the trivial and the standard representation.
We nevertheless offer a proof for completeness.

\begin{proof}
	By \Cref{pu_rem}, it is enough to verify the relevant inequalities $a \le u^k$ and $a u^k \ge 1$ on $a = 2$ and on $a = s_\lambda$ for any partition $\lambda$.
	Starting with the latter, in $\Schur_n$ we have $s_\lambda \le e_1^{|\lambda|}$ by \Cref{schur_bound}, and hence also $s_\lambda \le u^{|\lambda|}$ in $\Schur_n^S$. For the lower bound, we have $e_n^{\lambda_1} \le s_\lambda e_1^k$ with $k = \sum_{i=1}^n (\lambda_i - \lambda_1)$ by the dual Pieri rule, since every term in the Schur polynomial expansion on the left-hand side also appears on the right-hand side. This proves the relevant inequality in $\Schur_n^S$.

	Concerning $2$, note that $e_n \le e_1^n$ gives the upper bound $2 = 1 + e_n \le u^n$ in $\Schur_n^S$.
	The lower bound $2 \ge 1$ is trivial.
\end{proof}

The main part of the proof is now straightforward boilerplate:

\begin{proof}[Proof of \Cref{main}]
	Using \Cref{rep_iso}, we work with $\Schur^S_n$ instead of $\Rep{SU(n)}$.	
	By \Cref{SchurS_pu}, we know that every element that corresponds to a generic representation is power universal.

	The proof is then completed by the following observations:
	\begin{itemize}
		\item The monotone homomorphisms $\Schur_n^S \to \R_+$ correspond to the evaluation maps at $x \in T_n$.
			Indeed this is an immediate consequence of \Cref{real_schur} together with the definition of $\Schur_n^S$ as the quotient of $\Schur_n$ by $e_n \sim 1$, and noting that every such homomorphism necessarily has trivial kernel by the existence of a power universal element.
		\item \Cref{real_schur} tells us that the monotone homomorphisms $\Schur_n \to \TR_+$ are given by linear optimization over the Newton polytope in a fixed direction $y \in \R^n$.
			Such a homomorphism descends to $\Schur_n^S$ if and only if $\sum_i y_i = 0$.
			Under the isomorphism $\Schur_n^S \cong \Rep{\SL(n,\C)}$, these homomorphisms correspond exactly to the linear optimization over the weight polytope.
			This produces condition~\ref{main_trop} upon noting that linear optimization over one polytope is strictly dominated by linear optimization over a second one in all directions if and only if the first is contained in the interior of the second.
			\qedhere
	\end{itemize}
\end{proof}

\section{The case of \texorpdfstring{$\SL(2,\C)$}{SL(2,\C)}}
\label{SU2_case}

It is worth considering the case $n = 2$ in a bit more detail, since there the conditions of \Cref{main} take a more explicit form.
We identify representations of $\SU(2)$ with their multiplicities.
These are maps $\mult : \N_{>0} \to \N$, assigning to each $d > 0$ the multiplicity of the irreducible representation of dimension $d$.

\begin{thm}
	\label{main_SU2}
	Let
	\[
		\rho : \SL(2,\C) \to \GL(V), \qquad \sigma : \SL(2,\C) \to \GL(W)
	\]
	be algebraic representations with multiplicity maps $\mult_\rho, \, \mult_\sigma : \N_{>0} \to \N$.
	Suppose that the following conditions hold:
	\begin{enumerate}[label=(\roman*')]
		\item\label{SU2_real} For all $\alpha \in [0,\infty)$,
			\beq
				\label{SU2_real_ineq}
				\sum_{d \,\in\, \N_{>0}} \mult_\rho(d) \, \frac{\sinh(\alpha d)}{\sinh(\alpha)}
				\,<\,
				\sum_{d \,\in\, \N_{>0}} \mult_\sigma(d) \, \frac{\sinh(\alpha d)}{\sinh(\alpha)}
			\eeq
		\item\label{SU2_trop}
			\beq
				\label{SU2_trop_ineq}
				\max \{ d \in \N_{>0} \mid \mult_\rho(d) > 0 \} \,<\, 
				\max \{ d \in \N_{>0} \mid \mult_\sigma(d) > 0 \}.
			\eeq
	\end{enumerate}
	Then the following subrepresentation relations hold:
	\begin{enumerate}[label=(\alph*)]
		\item \newterm{Catalytic}: there is a nonzero representation $\eta$ such that
			\beq
				\label{SU2_cat}
				\rho \otimes \eta \hookrightarrow \sigma \otimes \eta.
			\eeq
		\item \newterm{Asymptotic}: if $\sigma$ is generic, then
			\beq
				\label{SU2_asymp}
				\rho^{\otimes n} \hookrightarrow \sigma^{\otimes n} \qquad \forall n \gg 1.
			\eeq
	\end{enumerate}
	Conversely, if \eqref{SU2_cat} holds for some nonzero $\eta$ or \eqref{SU2_asymp} holds for some $n \ge 1$, then the inequalities in~\eqref{SU2_real_ineq} and~\eqref{SU2_trop_ineq} hold at least non-strictly.
\end{thm}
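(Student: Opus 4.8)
The plan is to deduce \Cref{main_SU2} from \Cref{main} specialized to $n = 2$, by translating the hypotheses~\ref{main_real} and~\ref{main_trop} into~\ref{SU2_real} and~\ref{SU2_trop}; the conclusions~(a), (b) and the converse then transfer verbatim. First I would recall that the irreducible representations of $\SU(2)$ are indexed by their dimension $d \ge 1$, the $d$-dimensional one having as character the two-variable Schur polynomial $s_{(d-1)}$ (so $s_{(0)} = 1$ is the trivial representation and $s_{(1)} = e_1$ the standard one). Under the isomorphism of \Cref{rep_iso}, a representation $\rho$ with multiplicity map $\mult_\rho$ corresponds to $\chr_\rho = \sum_{d \ge 1} \mult_\rho(d)\, s_{(d-1)} \in \Schur_2^S$, and $\rho$ is generic exactly when $\mult_\rho(1) > 0$ and $\mult_\rho(2) > 0$.

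For the character condition, I would parametrize $T_2 = \{(x, x^{-1}) \mid x \in \C,\ x \neq 0\}$, so that its points with positive real coordinates are precisely those with $x \in \R_{>0}$, i.e.\ $x = e^\alpha$ for $\alpha \in \R$. On such a point,
\[
	s_{(d-1)}(e^\alpha, e^{-\alpha}) \,=\, \frac{e^{d\alpha} - e^{-d\alpha}}{e^\alpha - e^{-\alpha}} \,=\, \frac{\sinh(d\alpha)}{\sinh(\alpha)},
\]
which is even in $\alpha$ --- this is the Weyl-group symmetry $x \leftrightarrow x^{-1}$ --- and extends analytically across $\alpha = 0$ with value $d$ there. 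Hence~\eqref{main_real_ineq}, demanded for all $x \in \R_{>0}$, is equivalent to the same strict inequality for $\alpha \in [0,\infty)$, the $\alpha = 0$ instance being the dimension inequality $\dim V < \dim W$ read off by continuity; this is exactly~\eqref{SU2_real_ineq}. The identical reasoning with $<$ weakened to $\le$ handles the converse.

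For the weight-polytope condition, the $d$-dimensional irreducible has weights $d-1, d-3, \ldots, -(d-1)$, so its weight polytope is the interval $[-(d-1), d-1]$, whence $\WP(\rho) = [-(D_\rho - 1), D_\rho - 1]$ with $D_\rho \coloneqq \max\{d \in \N_{>0} \mid \mult_\rho(d) > 0\}$. Therefore $\WP(\rho) \subseteq \interior(\WP(\sigma))$ is equivalent to $D_\rho < D_\sigma$, i.e.~\eqref{SU2_trop_ineq}, and $\WP(\rho) \subseteq \WP(\sigma)$ is equivalent to $D_\rho \le D_\sigma$. With both hypotheses translated, \Cref{main} applies directly and yields~\eqref{SU2_cat}, \eqref{SU2_asymp}, and the converse.

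This is essentially a routine specialization, so I do not expect a genuine obstacle. The one place that wants a little care is the passage from $\alpha \in \R$ to $\alpha \in [0,\infty)$ and the reading of the $\alpha = 0$ term as the dimension inequality --- both resting on the evenness and continuity of $\sinh(d\alpha)/\sinh(\alpha)$ --- together with the degenerate case where $\rho$ is trivial: then $\WP(\rho) = \{0\}$ and the requirement $\{0\} \subseteq \interior(\WP(\sigma))$ becomes $D_\sigma \ge 2$, which is consistent with $1 = D_\rho < D_\sigma$.
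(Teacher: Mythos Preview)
Your proposal is correct and follows essentially the same approach as the paper: both deduce the $\SU(2)$ case from \Cref{main} by identifying \ref{main_real} with \ref{SU2_real} via the parametrization $(x_1,x_2)=(e^\alpha,e^{-\alpha})$ and the character formula $s_{(d-1)}(e^\alpha,e^{-\alpha})=\sinh(d\alpha)/\sinh(\alpha)$, and \ref{main_trop} with \ref{SU2_trop} via the weight polytope $[-(d-1),d-1]$. Your treatment is slightly more explicit about the evenness argument and the degenerate case $D_\rho=1$, but the substance is the same.
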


For $\alpha = 0$, the inequality~\eqref{SU2_real_ineq} needs to be understood in terms of L'Hôpital's rule, which makes it equivalent to
\[
	\dim(\rho) \,<\, \dim(\sigma).
\]
In its non-strict form, this is an obvious necessary condition for catalytic and asymptotic containment.

\begin{proof}
	In order to derive this from \Cref{main}, it is enough to identify condition \ref{main_real} there with the present \ref{SU2_real}, and similarly for \ref{main_trop} with \ref{SU2_trop}.

	For the former, let $\iota_d : \SL(2,\C) \to \GL(\C^d)$ be the irrep of dimension $d \ge 1$.
	It is well-known that its character satisfies
	\[
		\tr(\iota_d(\diag(e^{\alpha}, e^{-\alpha})) = \frac{\sinh(\alpha d)}{\sinh(\alpha)},
	\]
	and the L'Hôpital extension gives the correct character value at $\alpha = 0$, namely the dimension $d$.
	This implies that the two sides of~\eqref{SU2_real_ineq} are the relevant character values for $\rho$ and $\sigma$.
	Since every solution of $x_1 x_2 = 1$ with $x_1, x_2 \in \R_+$ is of the form $x_1 = e^\alpha$ and $x_2 = e^{-\alpha}$ for some $\alpha \in \R$, the equivalence with \ref{main_real} follows upon noting that we can assume $\alpha \ge 0$ without loss of generality.

	For the tropical part, note that the weight polytope of the irrep $\iota_d$ can be identified with the interval $[-(d-1),+(d-1)] \subseteq \R$.
	Since the weight polytope of a general representation is the union of the weight polytopes of its irreducible components,
	the weight polytope criterion~\ref{main_trop} from \Cref{main} is manifestly equivalent to the present condition~\ref{SU2_trop}.
\end{proof}

\begin{rem}
	Our proof of \Cref{main} relied on the classification of monotone homomorphisms $\Rep{\SL(n,\C)}\to\R_+$, which we have performed in the proof of \Cref{main} based on \Cref{real_schur}.
	For $n = 2$, the same classification formulated in terms of the hyperbolic sine as in~\eqref{SU2_real_ineq} also appears in work of Sz\'ekelyhidi and Vajday on the $\SU(2)$ hypergroup~\cite[Theorem~1]{SV}.
\end{rem}

\bibliographystyle{plain}
\bibliography{asymptotic_reps}

\begin{thebibliography}{1}

\bibitem{totallypositive}
Michael Aissen, Albert Edrei, I.~J. Schoenberg, and Anne Whitney.
\newblock On the generating functions of totally positive sequences.
\newblock {\em Proc. Nat. Acad. Sci. U.S.A.}, pages 303--307, 1951.
\newblock
  \href{https://www.pnas.org/doi/10.1073/pnas.37.5.303}{https://www.pnas.org/doi/10.1073/pnas.37.5.303}.

\bibitem{totallypositiveI}
Michael Aissen, I.~J. Schoenberg, and A.~M. Whitney.
\newblock On the generating functions of totally positive sequences. {I}.
\newblock {\em J. Analyse Math.}, 2:93--103, 1952.
\newblock
  \href{https://link.springer.com/article/10.1007/BF02786970}{https://link.springer.com/article/10.1007/BF02786970}.

\bibitem{brylawski}
Thomas Brylawski.
\newblock The lattice of integer partitions.
\newblock {\em Discrete Math.}, 6:201--219, 1973.

\bibitem{vergleichsstellensatz}
Tobias Fritz.
\newblock Abstract {V}ergleichsstellensätze for preordered semifields and
  semirings {I}.
\newblock \href{https://arxiv.org/abs/2003.13835}{arXiv:2003.13835}.

\bibitem{random_walks}
Tobias Fritz.
\newblock The asymptotic comparison of random walks on topological abelian
  groups.
\newblock \href{https://arxiv.org/abs/2004.13655}{arXiv:2004.13655}.

\bibitem{stanley2}
Richard~P. Stanley.
\newblock {\em Enumerative combinatorics. {V}ol. 2}, volume~62 of {\em
  Cambridge Studies in Advanced Mathematics}.
\newblock Cambridge University Press, Cambridge, 1999.
\newblock With a foreword by Gian-Carlo Rota and appendix 1 by Sergey Fomin.

\bibitem{SV}
L\'aszl\'o Sz\'ekelyhidi and L\'aszl\'o Vajday.
\newblock Functional equations on the {$SU(2)$}-hypergroup.
\newblock {\em Math. Pannon.}, 23(2):187--193, 2012.

\end{thebibliography}

\end{document}